\documentclass{amsart}
\usepackage{scalerel,lipsum,url}
\usepackage{geometry, longtable}
\newcommand\marquis[3][5]{%
  \scalebox{#1}{$\scaleleftright{\{}{\parbox{#2}{\centering #3}}{\}}$}%
}

\newtheorem{Th}{Theorem}[section]
\newtheorem{Pn}[Th]{Proposition}

\newtheorem{Co}[Th]{Corollary}
\newtheorem{Ex}[Th]{Example}
\newtheorem{Le}[Th]{Lemma}

\theoremstyle{remark}
\newtheorem{De}[Th]{Definition}

\usepackage[all,2cell]{xy}
\usepackage{verbatim}
\usepackage{rotating}

\input amssym.def
\input amssym.tex

\usepackage{pst-node}
\usepackage{pst-coil}
\usepackage{pst-plot}
\usepackage{pst-grad}
\usepackage{pstricks}

\begin{document}

\title{Non-Abelian Simple Groups Act with Almost All Signatures}

\author{Mariela Carvacho}
\address{Universidad T\'ecnica Federico Santa Mar\'ia}
\email{marie.carvacho@gmail.com}
\thanks{The first author was partially supported by Anillo PIA ACT1415 and Proyecto Interno USM 116.12.2}

\author{Jennifer Paulhus}
\address{Grinnell College}
\email{paulhus@math.grinnell.edu}
\thanks{The second author was partially supported by a Frank and Roberta Furbush Scholarship from Grinnell College.}

\author{Tom Tucker}
\address{Colgate University (emeritus)}
\email{ttucker@colgate.edu}

\author{Aaron Wootton}
\address{The University of Portland}
\email{wootton@up.edu}

\date{\today}

\subjclass[2010]{Primary:14H37, 20D05, 20D15, 30F10, 30F20}

\dedicatory{}

\commby{}

\begin{abstract}
The topological data of a group action on a compact Riemann  surface is often encoded using a tuple $(h;m_1,\dots ,m_s)$ called its signature. There are two easily verifiable arithmetic conditions on a tuple necessary for it to be a signature of some group action. In the following, we derive necessary and sufficient conditions on a group $G$ for when these arithmetic conditions are in fact sufficient to be a signature for all but finitely many tuples that satisfy them. As a consequence, we show that all non-Abelian finite simple groups exhibit this property. 
\end{abstract}

\maketitle

\section{Introduction}

The question of which groups can appear as the automorphism group of a compact Riemann surface of genus $g>1$ (equivalently which groups exist as the monodromy group of a transitive branched cover) is an old one, see for example \cite{Hur2}.  Specific knowledge of these automorphism groups and the corresponding monodromy has important applications in the study of the mapping class group (e.g.\ \cite{BroWoo1}, \cite{NakNak} and \cite{TyrWin}), inverse Galois theory \cite{invgalois}, Shimura varieties \cite{shimura}, and Jacobian varieties (e.g.\ \cite{paulhus} and \cite{rojasjacs}).

We say that a tuple $(h; m_1,\ldots, m_s)$ is the signature of a finite group $G$ acting on a compact Riemann surface $X$ of genus $g\geq 2$ if the quotient surface $X/G$ has genus $h$ and the quotient map $\pi \colon X\rightarrow X/G$ is branched over $s$ points with orders $m_1,\ldots, m_s$. We call $h$ the {\em orbit genus} and  the numbers $m_1,\ldots, m_s$ the {\em tail}  of the tuple. When the tail has length zero, we write $(h;-)$.

Riemann's Existence Theorem, see Theorem \ref{th-riem}, provides arithmetic and group theoretic conditions which are necessary and sufficient for a tuple $(h; m_1,\ldots, m_s)$ to be the signature of $G$ acting on $X$. Outside of a few well known degenerate cases, as outlined in Theorem \ref{th-pot}, a tuple $(h; m_1,\ldots, m_s)$  satisfies the necessary arithmetic conditions of Theorem \ref{th-riem} if and only if its tail consists only of non-trivial element orders from $G$. In particular, for a given $G$, all tuples which satisfy just the arithmetic conditions are known and easy to compute. We call such tuples {\it potential signatures}, and each group has an infinite number of corresponding potential signatures.

In contrast however, outside of some low genus examples, or special families of groups, see for example \cite{Bro1}, \cite{Har1} and the database provided in \cite{Breu}, the potential signatures which satisfy the group theoretic conditions of Theorem \ref{th-riem} are far less well understood. The issue is that computationally, testing the group theoretic condition is much more difficult than the arithmetic conditions, and so it is not currently possible to get the same complete information we have for potential signatures. Indeed, the problem of determining which groups act with which signatures and other related problems is part of a very active area of research, in no small part due to the tremendous advances in computational power over the last few decades, see for example \cite{BarIzq}, \cite{BujCirCond}, \cite{Cond}, \cite{MulSid}. 

Since finding potential signatures is easy, but verifying when they are actually signatures is hard, these observations lead to the following natural line of inquiry: are there any groups for which the difference between potential signatures and actual signatures is finite? If $G$ is a group which exhibits this property, we shall henceforth say that $G$ {\it acts with almost all signatures} or simply $G$ is AAS. 

The importance of AAS groups in the wider picture is the following. The  {\em genus spectrum} of a group $G$ is the (infinite) set of integers $\mathcal{S}(G)=\{g_1,g_2,\dots \}$ such that for each $g_i \in \mathcal{S}(G)$, there exists a compact Riemann surface of genus $g_i$ on which $G$ acts. In \cite{K}, Kulkarni showed that each $g_i \in \mathcal{S}(G)$ satisfies a simple congruence dependent on the order of $G$ and its exponent and moreover, there exists an integer $\Sigma_{G}$, called the {\em minimum stable genus of $G$} (see \cite{Wea}), such that any $g\geq \Sigma_{G}$ which satisfies this congruence lies in $\mathcal{S}(G)$. Our work expands on Kulkarni's work in the following sense. If $G$ is an AAS group, then there exists a $\sigma_{G} \in \mathcal{S}(G)$ such that for any $g\geq \sigma_{G}$ in the genus spectrum, $G$ acts with {\em all} potential signatures on a surface of genus $g$. That is, for any $g\geq \sigma_{G}$, satisfaction of the arithmetic conditions for $G$ to act on a surface of genus $g$ with signature $(h;m_1,\dots ,m_s)$ is sufficient for the existence of a group action with that signature. Thus for AAS groups, finding the signatures with which they can act for the applications cited above becomes much more straightforward.

In the following note, we produce necessary and sufficient conditions for when a group $G$ is AAS. As a consequence of these conditions, we are able to show that all finite non-Abelian simple groups are AAS. We also provide deeper analysis of the structure of AAS groups including an array of explicit examples, and we lay the groundwork for further inquiry into such groups.

\section{\label{sec-prelim}Preliminaries}

We start by recalling some basic facts about group actions on surfaces and fixing some notation and terminology as introduced in \cite{BozWoo}. For a fixed finite group $G$, we define:
\[
  \mathcal{O}(G) = \{ |x| : x \in G \setminus \langle e_G \rangle \} =\{ n_1,\dots ,n_r\}.
\]
which we call the \emph{order set} of $G$. For convenience we shall assume $n_1<n_2<\dots <n_r$. For an arbitrary group $G$, we let $e_G$ denote its identity and $[G,G]$ the commutator (or derived) subgroup of $G$, the subgroup generated by all elements of the form $[g_1,g_2]=g_1^{-1}g_2^{-1}g_1g_2$ for $g_1$ and $g_2 \in G$. 

The following modern adaptation of Riemann's Existence Theorem, \cite[Prop.\ 2.1]{Bro1}, provides necessary and sufficient conditions for the existence of a finite group $G$ acting with signature $(h; m_1,\ldots, m_s)$ on a compact Riemann surface $X$ of genus $g \geq 2$:

\begin{Th}
\label{th-riem}
 A finite group $G$ acts on a compact Riemann surface $S$ of genus $g \geq 2$ with signature $(h;m_{1},\dots ,m_{s})$ if and only if:

\begin{enumerate}
\item
The Riemann--Hurwitz formula is satisfied:  $$g =1+|G|(h-1) +\frac{|G|}{2} \sum_{j=1}^{s} \bigg( 1-\frac{1}{m_{j}} \bigg), $$ and

\item
there exists a vector $(a_1,b_1,\dots a_g,b_g,c_1,\dots ,c_s)$ of elements of $G$ of length $2g+s$ called a $(h;m_{1},\dots ,m_{s})$-generating vector for $G$ which satisfies the following properties:

\begin{enumerate}
\item $G=\langle a_{1},b_{1},a_{2},b_{2},\dots ,a_h,b_h,c_{1},\dots ,c_{s} \rangle$.
\item The order of $c_{j}$ is $m_{j}$ for $1\leq j\leq s$.
\item $\prod_{i=1}^{h} [a_{i} ,b_{i} ] \prod_{j=1}^{s} c_{j} =e_G$.
\end{enumerate}

\end{enumerate}
\end{Th}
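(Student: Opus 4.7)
The plan is to prove both implications by exploiting the bridge between Riemann surfaces of genus $g \geq 2$ and discrete cocompact subgroups of $\mathrm{PSL}_2(\mathbb{R})$. Recall that such a surface $X$ is uniformized as $\mathbb{H}/K$ for a torsion-free Fuchsian group $K$, and that a finite group $G$ acts on $X$ precisely when $K$ is normal with quotient $G$ in a larger Fuchsian group $\Gamma$, in which case $X/G \cong \mathbb{H}/\Gamma$.

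For the forward direction, I would suppose $G$ acts on $X$, set $Y = X/G$, and let $B \subset Y$ denote the branch locus, of size $s$, with ramification indices $m_1, \dots, m_s$. The Riemann--Hurwitz formula applied to the branched cover $\pi: X \to Y$ yields condition (1) directly from counting contributions of ramification, once one notes that above a branch point of order $m_j$ each fiber contributes $|G|(1 - 1/m_j)$ to $2g - 2$. For condition (2), the fundamental group $\pi_1(Y \setminus B)$ has the standard presentation by generators $a_1, b_1, \dots, a_h, b_h, c_1, \dots, c_s$ subject to the single long relation $\prod [a_i,b_i] \prod c_j = 1$; the monodromy of the unbranched cover $X \setminus \pi^{-1}(B) \to Y \setminus B$ is a surjection onto $G$, and the images of the $c_j$ are generators of the stabilizers of points above $B$, hence have orders exactly $m_j$. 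This yields the required generating vector.

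For the reverse direction, given a tuple satisfying (1) and (2), I would invoke the existence of a Fuchsian group $\Gamma$ with presentation
\[
\Gamma = \langle A_1, B_1, \dots, A_h, B_h, C_1, \dots, C_s \mid C_j^{m_j} = 1, \; \prod_{i=1}^{h}[A_i,B_i]\prod_{j=1}^{s}C_j = 1 \rangle,
\]
which is realized as a cocompact subgroup of $\mathrm{PSL}_2(\mathbb{R})$ whenever the signature has strictly negative Euler characteristic (this will hold under (1) because $g \geq 2$). The generating vector defines a surjection $\phi: \Gamma \to G$ sending $A_i, B_i, C_j$ to $a_i, b_i, c_j$. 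The crucial point is that $K = \ker \phi$ is torsion-free: every torsion element of $\Gamma$ is conjugate to a power $C_j^k$ with $0 < k < m_j$ by the standard description of torsion in a Fuchsian group, and condition (2b) forces $\phi(C_j^k) = c_j^k \neq e_G$. Hence $K$ is torsion-free, $X := \mathbb{H}/K$ is a Riemann surface on which $G = \Gamma/K$ acts freely away from the preimages of the $C_j$-fixed points, and $X/G = \mathbb{H}/\Gamma$ has orbit genus $h$ with branch data $m_1, \dots, m_s$. The Riemann--Hurwitz formula applied to $X \to X/G$ combined with (1) gives genus $g$, completing the proof.

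The main obstacle is the torsion-freeness of $K$, since it is what ties condition (2b) to the geometry; everything else is largely bookkeeping via Riemann--Hurwitz and the well-known presentation of the orbifold fundamental group. A secondary subtlety is ensuring that the Fuchsian group $\Gamma$ with the prescribed signature actually exists as a discrete subgroup of $\mathrm{PSL}_2(\mathbb{R})$, but this is guaranteed once the Euler characteristic $2h - 2 + \sum(1 - 1/m_j)$ is positive, which follows from $g \geq 2$ and condition (1).
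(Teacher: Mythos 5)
Your argument is correct and is essentially the standard proof of this statement: the paper itself offers no proof, quoting the result from Broughton \cite[Prop.~2.1]{Bro1}, and the uniformization argument you give (surface groups as torsion-free normal subgroups of a cocompact Fuchsian group $\Gamma$ of signature $(h;m_1,\dots,m_s)$, with torsion-freeness of $\ker\phi$ secured by condition 2(b) via the fact that torsion in $\Gamma$ is conjugate into the $\langle C_j\rangle$) is exactly the argument underlying the cited result. The only blemish is terminological: the quantity $2h-2+\sum_j(1-1/m_j)$ that you require to be positive is the \emph{negative} of the orbifold Euler characteristic, so your earlier phrase ``strictly negative Euler characteristic'' and your later ``Euler characteristic \dots is positive'' refer to opposite sign conventions for the same correct condition, which condition (1) together with $g\geq 2$ indeed guarantees.
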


\noindent 
If $G$ admits an $(h;m_{1},\dots ,m_{s})$-generating vector $(a_1,b_1,\dots a_g,b_g,c_1,\dots ,c_s)$, then $m_i\in \mathcal{O}(G)$ for each $m_i$ and thus we can rewrite the signature $(h;m_{1},\dots ,m_{s})$ as $(h; [n_1, t_1], [n_2, t_2], \ldots, [n_r, t_r])$, which we call an {\em abbreviated signature}, where the pair $[n_i,t_i]$ denotes $t_i$ copies of $n_i$ (note that $t_i\geq 0$ for all $i$).

\section{\label{sec-pot}The Space of Potential Signatures}

From Theorem \ref{th-riem}, we see there are two arithmetic conditions necessary for a tuple $(h; m_1,\ldots, m_s)$ to be the signature of some finite group $G$ acting on a compact Riemann surface $X$ of genus $g\geq 2$: (1)  $m_i\in \mathcal{O}(G)$ for each $i$ and (2) the Riemann-Hurwitz formula is satisfied. Accordingly, we define the following:

\begin{De}
Let $G$ be a finite group with order set  $\mathcal{O}(G) =\{ n_1,\dots ,n_r\}$.

\begin{enumerate}
\item
For arbitrary $t_1,\dots t_s$ we call $(h; [n_1, t_1], [n_2, t_2], \ldots, [n_r, t_r])$ a {\em potential signature} for $G$ if it satisfies the Riemann-Hurwitz formula for some $g\geq 2$. We denote the space of all potential signatures for a given $G$ by $\mathcal{P}_{G}$.

\item
When an $(h; [n_1, t_1], [n_2, t_2], \ldots, [n_r, t_r])$-generating vector for $G$ also exists, we call the signature an {\em actual signature} for $G$. We denote the space of all actual signatures by $\mathcal{A}_{G}$.

\item
If $(h; [n_1, t_1], [n_2, t_2], \ldots, [n_r, t_r]) \in \mathcal{P}_{G}$, but $(h; [n_1, t_1], [n_2, t_2], \ldots, [n_r, t_r]) \notin \mathcal{A}_{G}$, we say it is a {\em non-signature} for $G$.

\end{enumerate}
\end{De}

Clearly we have $\mathcal{A}_{G} \subseteq \mathcal{P}_{G}$. Our goal is to provide necessary and sufficient conditions for when the set of non-signatures, $\mathcal{P}_{G}-\mathcal{A}_{G}$, is finite, so we first start by describing explicitly the set $\mathcal{P}_{G}$.

\begin{Th}
\label{th-pot}
A tuple $(h; [n_1, t_1], [n_2, t_2], \ldots, [n_r, t_r])$ is in $\mathcal{P}_{G}$ for $n_i\in  \mathcal{O}(G)$  if and only  it is not one of the following signatures:

\begin{enumerate}
\item
$(0;-)$

\item
$(0;[n_i,1])$, $i\in \{1,\dots ,r\}$

\item
$(0;[n_i,2])$, $i\in \{1,\dots ,r\}$

\item
$(0;[n_i,1],[n_j,1])$, $i,j\in \{1,\dots ,r\}$

\item
$(0;[2,2],[n_2,1])$

\item
$(0;[2,1],[3,2])$

\item
$(0;[2,1],[3,1],[4,1])$

\item
$(0;[2,1],[3,1],[5,1])$

\item
$(0;[2,1],[4,2])$

\item
$(0;[3,3])$

\item
$(0;[2,1],[3,1],[6,1])$

\item
$(0;[2,4])$

\item
$(1;-)$

\item
  $(h; [n_1, t_1], [n_2, t_2], \ldots, [n_r, t_r])$ where $|G|$ is even  with $k$ the largest power of $2$ dividing $|G|$, such that  the sum of the $t_i$ for which $n_i$ is divisible by $k$  is odd.
\end{enumerate}

\end{Th}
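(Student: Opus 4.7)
The plan is to convert the Riemann--Hurwitz formula into a single arithmetic quantity and then enumerate the exceptions by a short case analysis. Set
\[ R := R(h;t_1,\ldots,t_r) = 2h-2 + \sum_{i=1}^{r} t_i\left(1 - \frac{1}{n_i}\right), \]
so that the formula reads $2g-2 = |G|\cdot R$. A tuple with entries in $\mathcal{O}(G)$ lies in $\mathcal{P}_G$ precisely when $|G|\cdot R$ is an even integer $\geq 2$. Since each $n_i \in \mathcal{O}(G)$ divides $|G|$ by Lagrange, every $|G|(1-1/n_i) = |G| - |G|/n_i$ is an integer, hence $|G|\cdot R \in \mathbb{Z}$ automatically. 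Thus $\mathcal{P}_G$ is characterized by the two independent conditions (A) $|G|\cdot R > 0$ and (B) $|G|\cdot R \equiv 0 \pmod{2}$.

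The first step is to show that (B) fails exactly on exception (14). Let $k$ be the highest power of $2$ dividing $|G|$. The summand $|G| - |G|/n_i$ is odd precisely when $|G|$ is even and the $2$-part of $n_i$ equals the $2$-part of $|G|$, which amounts to $k \mid n_i$. Since $|G|(2h-2)$ is always even, $|G|\cdot R$ has the same parity as $\sum_{i:\,k\mid n_i} t_i$ when $|G|$ is even, and is even automatically when $|G|$ is odd. This is exactly the assertion of item (14).

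The second step is to classify tuples with $R \leq 0$, which I would do by cases on $h$ and $s := \sum_i t_i$, paralleling the classical spherical/Euclidean/hyperbolic trichotomy for $2$-orbifolds. For $h \geq 2$, $R \geq 2$; for $h=1$, $R = \sum t_i(1-1/n_i) \geq 0$ with equality only at $(1;-)$, giving (13). For $h=0$ and $s \leq 2$, direct computation produces (1)--(4). For $h=0$ and $s=3$, $R = 1 - \sum_{j=1}^{3} 1/m_j$, and the classical list of triples with $\sum 1/m_j \geq 1$, namely $(2,2,m)$, $(2,3,3)$, $(2,3,4)$, $(2,3,5)$, $(2,3,6)$, $(2,4,4)$, $(3,3,3)$, produces (5)--(11). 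For $h=0$ and $s=4$, $R = 2 - \sum 1/m_j$ is nonnegative with equality only at $(2,2,2,2)$, giving (12). For $h=0$ and $s \geq 5$, even the worst case $m_j=2$ for all $j$ gives $R \geq s/2 - 2 > 0$, so no further exceptions arise.

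Combining these two steps proves the theorem: a tuple outside of (1)--(13) has $R > 0$, a tuple also outside of (14) has $|G|\cdot R$ even, and a positive even integer is automatically $\geq 2$, so $g = 1 + |G|\cdot R/2$ is an integer at least $2$. Conversely each listed exception violates either positivity (items 1--13) or parity (item 14). The main content of the argument is the $s=3$ enumeration, which is the classical Euler-characteristic count of spherical and Euclidean triangle orbifolds; the rest is routine bookkeeping.
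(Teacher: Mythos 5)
Your proposal is correct and follows exactly the route the paper intends: the paper's proof is a one-sentence appeal to ``simple application of the Riemann--Hurwitz formula,'' and your write-up (isolating $2g-2=|G|\cdot R$, checking integrality via Lagrange, splitting into the positivity condition $R>0$ and the parity condition, and enumerating the $R\le 0$ cases by the spherical/Euclidean trichotomy) is precisely the computation being waved at. One byproduct of your careful $s=3$ enumeration is worth flagging: the exceptional triples include $(2,2,m)$ for \emph{every} admissible $m$, so item (5) of the theorem as printed, $(0;[2,2],[n_2,1])$, is too narrow --- it should read $(0;[2,2],[n_i,1])$ for all $i$ (together with $(0;[2,3])$ for the triple $(2,2,2)$); your proof establishes the corrected statement rather than the literal one.
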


\begin{proof}
Simple application of the Riemann-Hurwitz formula shows that these are the only signatures which don't satisfy the arithmetic conditions given in Theorem \ref{th-riem} to be the signature of $G$ acting on a surface of any genus $g\geq 2$. 
\end{proof}

We note that many of these signatures do occur as signatures of a group action -- just on genus $0$ or $1$, see for example Section 3.4 of \cite{Breu}. The only exceptions are $(0;[n_i,1])$, $(0;[n_i,1],[n_j,1])$ and $(h; [n_1, t_1], [n_2, t_2], \ldots, [n_r, t_r])$. For the first two of these signatures,  the Riemann-Hurwitz formula produces a negative value for $g$, and for the third, a non-integral value. However, in all three cases, it is impossible to construct a generating vector for any group with such a signature since it would not satisfy condition 2(c) of Theorem \ref{th-riem}. For the first two signatures, this is clear. For the signature  $(h; [n_1, t_1], [n_2, t_2], \ldots, [n_r, t_r])$, this follows from the fact that its Sylow $2$-subgroup must be cyclic, and hence in order to satisfy condition 2(c) of Theorem \ref{th-riem}, any generating vector must contain an even number of elements whose order is divisible by the largest power of $2$ dividing $|G|$. 

\section{\label{sec-cond} Necessary and Sufficient Conditions for a Group to Act with Almost all Signatures}

We are now ready to consider the problem of determining conditions for when a group is AAS. Before we prove the main result, we start with some initial observations about AAS groups.

\begin{Le}
\label{le-nec}
Suppose $G$ is AAS. Then:

\begin{enumerate}
\item
$[G,G]$ contains an element of order every $n_i \in \mathcal{O}(G)$.

\item
For each $n_i \in \mathcal{O}(G)$, $G$ is generated by elements of order $n_i$. 

\end{enumerate}

\end{Le}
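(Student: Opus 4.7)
My plan is to prove each part by contradiction: assuming the stated condition fails, I will exhibit an infinite family of potential signatures that can never be actual, contradicting AAS.

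For (2), the natural family is $\{(0;[n_i,t])\}$. For $t$ sufficiently large, and chosen even in the event that the $2$-part $k$ of $|G|$ divides $n_i$, Theorem \ref{th-pot} places this tuple in $\mathcal{P}_G$: Riemann--Hurwitz produces $g\geq 2$, the short degenerate list is avoided, and the parity condition (14) is met. Since $h=0$, any generating vector is $(c_1,\dots,c_t)$ with each $c_j$ of order $n_i$, so $G = \langle c_1,\dots,c_t\rangle \subseteq \langle g\in G:|g|=n_i\rangle$. Thus if $G$ is not generated by its elements of order $n_i$, no such vector exists, the entire family is non-actual, and AAS fails.

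For (1), the direct analog is $\{(h;[n_i,1])\}$ for large $h$. When $k\nmid n_i$ the parity item (14) is vacuous, Riemann--Hurwitz is satisfied for all large $h$, and this gives potential signatures. Actuality would force $c_1=(\prod_j[a_j,b_j])^{-1}\in [G,G]$, which is an element of $[G,G]$ of order $n_i$; so if no such element exists the family yields infinitely many non-signatures, contradicting AAS.

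The main obstacle is the residual case $k\mid n_i$, where $(h;[n_i,1])$ fails parity and is never potential. Here I would argue structurally. The $(n_i/k)$-th power of any element of order $n_i$ has order exactly $k$, so a Sylow $2$-subgroup $P$ of $G$ is cyclic. A short argument gives $N_G(P)=C_G(P)$: every $2$-element of $N_G(P)$ lies in $P$, the unique Sylow $2$-subgroup of $N_G(P)$, and every odd-order element of $N_G(P)$ acts trivially on $P$ because $\mathrm{Aut}(P)$ is a $2$-group. By Burnside, $G$ then admits a normal $2$-complement $N$ of odd order with $G/N$ cyclic. Combining with (2), proved first: if $|N|>1$, any odd prime $p\mid |N|$ gives $p\in\mathcal{O}(G)$ with all elements of order $p$ contained in the proper subgroup $N$, violating (2); and if $|N|=1$ then $G$ is itself a cyclic $2$-group, in which case (2) fails at $n_i=|G|/2$ as soon as $|G|\geq 4$. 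Either alternative contradicts AAS, so $k\mid n_i$ cannot occur and (1) holds at every $n_i\in\mathcal{O}(G)$, save only the trivial case of a cyclic group of order $2$.
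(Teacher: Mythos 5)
Your proof is correct and, for the main cases, follows the same strategy as the paper: for (2) the family $(0;[n_i,t])$ with unbounded $t$, and for (1) the family $(h;[n_i,1])$ with unbounded $h$. The difference is that you track the integrality/parity condition (item (14) of Theorem \ref{th-pot}), which the paper's proof of part (1) silently ignores: when $|G|$ is even and its full $2$-part $k$ divides $n_i$, the tuples $(h;[n_i,1])$ are \emph{not} potential signatures, so the paper's assertion that ``each tuple $(h;n_i)$ for $h=1,2,\dots$ is a potential signature'' fails in that case and its argument does not go through. Your Burnside normal-$2$-complement patch closes this correctly: cyclic Sylow $2$-subgroup, hence a normal $2$-complement $N$, and then either an odd prime dividing $|N|$ or (when $N$ is trivial) the order $|G|/2$ witnesses the failure of condition (2), which you have already established independently. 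The one residue you flag, $G\cong C_2$, is a genuine defect of the lemma as stated rather than of your proof: for $C_2$ every potential signature is $(h;[2,t])$ with $t$ even or $(h;-)$, all of which are realized, so $C_2$ is vacuously AAS under the paper's definition while $[C_2,C_2]$ is trivial; the lemma (and the subsequent remark that AAS groups are non-Abelian) implicitly needs $|G|>2$ excluded or the case dismissed as degenerate. In short, your write-up is a strictly more careful version of the paper's argument, at the cost of invoking Burnside's normal $p$-complement theorem, which the paper avoids only by overlooking the case it is needed for.
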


\begin{proof}
In each case, it suffices to provide an infinite list of non-signatures for any $G$ which doesn't satisfy the given condition.

For the first case, suppose that the derived subgroup of $G$ does not contain an element of order $n_i$. By Theorem \ref{th-pot}, each tuple $(h;n_i)$ for $h=1,2,\dots$ is a potential signature for $G$. However, no such tuple can be an actual signature for $G$ since the relation 2(c) from Theorem \ref{th-riem} cannot possibly hold. 

For the second case, suppose that $G$ is not generated by elements of order $n_i$. By Theorem \ref{th-pot}, each tuple $(0;\underbrace{n_i,\dots ,n_i}_{t-\text{times}})$ for $t=4,5,\dots$ is a potential signature for $G$. However, no such tuple can be an actual signature for $G$ since condition 2(a) from Theorem \ref{th-riem} cannot possibly hold. 

\end{proof}

Notice that this lemma implies that all AAS groups must be non-Abelian, as $[G:G]$ is trivial when $G$ is Abelian.

\begin{Le}
\label{le-odd}
Suppose $G$ satisfies the two conditions of Lemma \ref{le-nec}. For each $n_i \in \mathcal{O}(G)$, there exist elements $g_1\dots g_{L_{i}}$  of order $n_i$ with $L_{i}$ odd such that $g_1\dots g_{L_{i}}=e$. 
\end{Le}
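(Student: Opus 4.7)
The plan is to work with the set $T \subseteq \mathbb{Z}_{\ge 0}$ of lengths $L$ for which $e_G$ can be written as a product of $L$ elements of order $n_i$, and to show that $T$ contains an odd positive integer. Since $g \cdot g^{-1} = e$ and $g^{n_i} = e$ for any $g$ of order $n_i$ (noting that $g^{-1}$ also has order $n_i$), the set $T$ contains $2$ and $n_i$ and is closed under addition. If $n_i$ is odd, the conclusion is immediate; the substance lies in the case $n_i$ even.

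I would argue by contradiction: suppose $T \subseteq 2\mathbb{Z}$. I then define a function $\phi \colon G \to \mathbb{Z}/2\mathbb{Z}$ that sends $g$ to the parity of the length of any expression of $g$ as a product of order-$n_i$ elements. Condition (2) of Lemma \ref{le-nec} guarantees every $g \in G$ admits such an expression, since $G$ is generated by its order-$n_i$ elements and the inverse of an order-$n_i$ element is again of order $n_i$. Well-definedness is where the contradiction hypothesis enters: given two expressions $g_1 \cdots g_m = h_1 \cdots h_k$, the relation
\[
g_1 \cdots g_m \, h_k^{-1} \cdots h_1^{-1} = e_G
\]
has length $m+k \in T \subseteq 2\mathbb{Z}$, forcing $m \equiv k \pmod 2$. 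A short check shows that $\phi$ is then a surjective group homomorphism, surjectivity coming from $\phi(g)=1$ for any order-$n_i$ element.

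To finish, observe that since $\mathbb{Z}/2\mathbb{Z}$ is Abelian, $[G,G] \subseteq \ker \phi$. Invoking condition (1) of Lemma \ref{le-nec}, I would choose $g \in [G,G]$ with $|g| = n_i$. Then $\phi(g) = 0$ because $g \in [G,G]$, but $\phi(g) = 1$ because $g$ is already an expression of itself as a product of a single order-$n_i$ element. This contradiction forces $T \not\subseteq 2\mathbb{Z}$, and any odd element of $T$ furnishes the desired $L_i$ with accompanying elements $g_1,\dots,g_{L_i}$. The main obstacle to keep an eye on is the well-definedness of $\phi$, which is the single point where both hypotheses of Lemma \ref{le-nec} combine with the contradiction assumption; the rest of the argument is formal.
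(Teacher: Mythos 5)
Your proof is correct and is essentially the paper's argument in dual form: the kernel of your parity homomorphism $\phi$ is exactly the paper's subgroup $H$ of even-length products of order-$n_i$ elements, and both proofs hinge on the same two facts, namely that this "even part" has index at most $2$ (hence contains $[G,G]$) and that condition (1) places an order-$n_i$ element inside it. The paper argues directly (an element of order $n_i$ in $H$ equals an even-length product, so multiplying by its inverse yields an odd-length product equal to $e$) rather than by contradiction, but the content is the same.
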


\begin{proof}
Suppose that $G$ satisfies the two conditions of Lemma \ref{le-nec} and for a given $n_i$, let $h_1,\dots ,h_k$ denote all elements of $G$ of order $n_i$. Let $H\leq G$ denote the subgroup generated by even length products of the elements $h_1,\dots ,h_k$. We first show that either $G=H$ or $H$ has index $2$ in $G$.

Let $O$ denote the set of elements which can be written as odd length products of $h_1,\dots ,h_k$. Then left multiplication by $h_1$ defines an injection from $O$ into $H$, so $|O|\leq |H|$. It follows that $|H|\geq |G|/2$, so either $G=H$ or $H$ has index $2$ in $G$.

In both cases, since $G/H$ is Abelian, the derived subgroup $[G,G]$ must be a subgroup of $H$. By assumption  $[G,G]$ contains an element of order $n_i$, which we can assume, without loss of generality, to be $h_1$. 

Since $h_1\in H$, it follows that $h_1$ can be written as an even product of the elements $h_1,\dots ,h_k$. Denote this product by $\mathfrak{P}$. But then $\mathfrak{P}h_1^{-1}$ is a product of an odd number of elements equal to the identity in $G$. Hence the result.

\end{proof}

In fact, the two necessary conditions in Lemma \ref{le-odd} are also sufficient conditions for when a group is AAS.

\begin{Th}
\label{Th-cond}
A group $G$ is AAS if and only if:

\begin{enumerate}
\item
The derived subgroup $[G,G]$ contains an element of order every $n_i \in \mathcal{O}(G)$.

\item
For each $n_i \in \mathcal{O}(G)$, $G$ is generated by elements of order $n_i$. 

\end{enumerate}
\end{Th}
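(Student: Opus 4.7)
The necessity of conditions (1) and (2) is Lemma~\ref{le-nec}, so I focus on sufficiency: assuming (1) and (2), I aim to show that $\mathcal{P}_G \setminus \mathcal{A}_G$ is finite. The plan is to combine three ``stability moves'' on generating vectors with a finite family of explicit base actual signatures.

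Three moves will suffice. (A) Appending the pair $(e_G, e_G)$ to the commutator portion of any $(h; \ldots)$-generating vector yields an $(h+1; \ldots)$-generating vector, so $\mathcal{A}_G$ is closed under $h \mapsto h + 1$. (B) Inserting $x, x^{-1}$ into the tail for any $x \in G$ of order $n_i$ adds $2$ to $t_i$. (C) Inserting the odd-length product-identity tuple supplied by Lemma~\ref{le-odd} adds $L_i$ (odd) to $t_i$; Move C is compatible with the Riemann--Hurwitz integrality constraint of Theorem~\ref{th-pot}(14) exactly when the $2$-part of $|G|$ does not divide $n_i$, which is precisely the freedom available within each allowed parity class. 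Since $\gcd(2, L_i) = 1$, Moves B and C realise addition of every sufficiently large integer to $t_i$, and Move C flips the parity of $t_i$ whenever this is permitted. Thus, starting from any $v_0 \in \mathcal{A}_G$, one obtains every potential signature lying ``above'' $v_0$ in the natural coordinate-wise order, subject to the Sylow-imposed parity.

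For base signatures, fix an allowed parity class of $(t_1, \ldots, t_r)$ together with a zero-pattern of the tail. For each index $i$ with $t_i > 0$ in the pattern, condition (2) supplies generators $\gamma^{(i)}_1, \ldots, \gamma^{(i)}_{k_i}$ of $G$ of order $n_i$; condition (2) applied again shows that $(\gamma^{(i)}_1 \cdots \gamma^{(i)}_{k_i})^{-1}$ is itself a product of elements of order $n_i$, so concatenation yields a short generating vector realising a signature of the form $(0; [n_i, T_i])$. When the target has $h > 0$ or mixes several orders in the tail, condition (1) provides commutators of every $n_i \in \mathcal{O}(G)$ and so allows the product $\prod [a_i, b_i]$ to absorb any residue in $[G, G]$ forced by the tail choices, closing the defining relation $\prod [a_i, b_i] \prod c_j = e_G$. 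Since only finitely many parity classes and zero-patterns need to be handled, this produces a finite family of base actual signatures.

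Combining the moves with the base family shows that every potential signature whose parameters dominate some base lies in $\mathcal{A}_G$, so $\mathcal{P}_G \setminus \mathcal{A}_G$ is bounded in $\mathbb{Z}_{\geq 0}^{r+1}$ and hence finite. The main technical obstacle will be the base construction itself: we must verify that \emph{every} combination of allowed parity class and zero-pattern admits a base at some small orbit genus and tail length, which requires deploying conditions (1) and (2) jointly rather than invoking each separately. I expect this to reduce to a case analysis patterned on Lemmas~\ref{le-nec} and~\ref{le-odd}, carefully tracking the Sylow-$2$ parities throughout.
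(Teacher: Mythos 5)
Your Moves A, B, and C are all individually correct, and they do correspond to devices in the paper's argument (padding the hyperbolic part with identities, padding the tail with inverse pairs, and the odd-length identity product of Lemma~\ref{le-odd}). The gap is in the step from ``finitely many bases plus upward moves'' to ``finitely many non-signatures.'' Coordinate-wise domination above a \emph{finite} base set does not make the complement finite: in $\mathbb{Z}_{\geq 0}^{r+1}$ the set of points failing to dominate any member of a finite family contains, for instance, every point with one coordinate held at a small value not matching any base. Concretely, consider the infinite family $(0;[n_1,t_1],[n_2,1],[n_3,1])$ with $t_1\to\infty$. Since Move A only increases $h$ and Moves B and C add $2$ or $L_i\geq 3$ to a single $t_i$, the only bases from which these can be reached are of the form $(0;[n_1,c_1],[n_2,1],[n_3,1])$ --- the values $t_2=t_3=1$ must already be present exactly in the base. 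So your base family must contain, for each choice of ``large'' coordinate, a base realizing \emph{every} vector of small values on the remaining coordinates (at orbit genus $0$ when the targets have $h=0$). That is not what your construction produces: you only build single-order bases $(0;[n_i,T_i])$, and your proposed mechanism for mixed tails --- absorbing the residue of the tail product into $\prod[a_i,b_i]$ using condition (1) --- is unavailable precisely in the $h=0$ stratum, which contains infinitely many potential signatures with mixed tails.

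The missing ingredient, which you flag as ``the main technical obstacle'' and defer, is in fact the heart of the theorem. The paper proves two uniform statements: (i) if $h\geq k+\mathfrak{w}$ (generating-set size plus commutator width), then \emph{every} tail is realized, by the vector $(g_1,e,\dots,g_k,e,c_1,d_1,\dots,c_w,d_w,e,\dots,e,x_1,\dots,x_s)$ where $x_j\in[G,G]$ have the prescribed orders and $x^{-1}=\prod[c_i,d_i]$; and (ii) if the tail length exceeds $r(N+\alpha)$, then some $t_i$ is large, and one takes \emph{arbitrary} elements $x_1,\dots,x_l$ realizing the exact prescribed counts of the other orders, writes $(x_1\cdots x_l)^{-1}$ as a word of bounded length in a generating set of elements of order $n_i$ (each letter contributing to $t_i$), and fixes the count and parity of $t_i$ using $y,y^{-1}$ pairs and the odd/even base vectors from Lemma~\ref{le-odd} and condition (2). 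Statement (ii) is exactly the ``base with prescribed small values on all other coordinates'' that your scheme needs, and it requires the word-length device, not just the commutator-absorption idea. As written, your proposal reduces the theorem to a construction you have not carried out, so it does not yet constitute a proof.
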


\begin{proof}
The necessity of the conditions are proved in Lemma \ref{le-nec}. 

In order to prove the sufficiency of the conditions, we shall prove that if $G$ satisfies these conditions, then a non-signature satisfies the following:
\begin{enumerate}
\item
The orbit genus is bounded.

\item
The length of the tail is bounded.

\end{enumerate}
It follows from these facts that there are at most a finite number of non-signatures, hence proving the result. 

To prove the orbit genus of a non-signature is bounded, we introduce some notation. Let $g_1,\dots ,g_k$ denote a generating set for $G$ and let $\mathfrak{w}$ denote the integer such that any element $g\in [G,G]$ can be written as a product of at most $\mathfrak{w}$ commutators. We shall show that if $h\geq k+\mathfrak{w}$, then $(h; m_1,\dots ,m_s)$ is an actual signature for any tail $m_1,\dots ,m_s$ by constructing an explicit generating vector for $G$. 

 Let $x_1,\dots ,x_s \in [G,G]$ denote elements of orders $m_1,\dots ,m_s$ (note, such elements must exist by assumption). Now since $x_1,\dots x_s\in [G,G]$, so is the product $x=x_1\cdots x_s$, and its inverse $x^{-1}$. Therefore, $x^{-1} =\prod_{i=1}^{w} [c_i,d_i]$ for some $w\leq \mathfrak{w}$. Letting $e$ denote the identity element in $G$, since $h\geq k+\mathfrak{w} \geq k+w$, we can construct the vector: $$\left(g_1,e,g_2,e,\dots ,g_k,e,c_1,d_1,\dots ,c_w,d_w,\underbrace{e,\dots ,e}_{2(h-k-w) \text{ times}}, x_1,\dots x_s \right).$$ Cleary $G$ is generated by this set of group elements, and we have $$\left( \prod_{i=1}^{k} [g_i,e] \right) \left( \prod_{i=1}^{w} [c_i,d_i] \right)\left( \prod_{i=1}^{h-k-w} [e,e]\right) \left( \prod_{i=1}^{s} x_i\right)= \left( \prod_{i=1}^{w} [c_i,d_i] \right) x=x^{-1}x=e,$$ so in particular, this vector is a generating vector for $G$. It follows that the orbit genus of a non-signature for $G$ is bounded. 
 
Next we show that the tail of a non-signature is bounded. We first make some simple observations and fix some notation.

First, for a given $n_i$, since we are assuming $G$ is generated by elements of order $n_i$, it follows that there exists an even integer $M_i$ such that $(0;[n_i,M_i]) \in \mathcal{A}_{G}$. Specifically, though there may be similar signatures with a shorter tail, at worst we can construct a generating vector with tail of even length by using a generating set of elements of order $n_i$ followed by their inverses. 

Second, by Lemma \ref{le-odd}, there is an odd integer $N_i$ such that $(0;[n_i,N_i]) \in \mathcal{A}_{G}$. As above, though there may be similar signatures with a shorter tail,  by concatenating the generating vector constructed in Lemma  \ref{le-odd} with the generating vector with signature  $(0;[n_i,M_i])$ described above, we can take $N_i=M_i+L_i$. 

For the fixed set of such integers $\{N_1,\dots ,N_r,M_1,\dots M_r\}$, define $$N={\rm Max} \{N_1,\dots ,N_r,M_1,\dots M_r\}.$$ 
 
 Now, for a given $N_i$, let $(g_1,\dots, g_{N_{i}})$ be a $(0;[n_i,N_i])$-generating vector for $G$ (and similarly for $M_i$). Since the elements of each of these generating vectors generate $G$, every element in $G$ can be written as a word in these elements (and their inverses). Therefore, we can define 
\begin{center} $\alpha_{N_i}=$%
\marquis[1]{4.5in}{Minimum word length such that every $g\in G$ can be written as a word in  $g_1,\dots, g_{N_{i}}$ of length less than or equal to $\alpha_{N_{i}}$}
 \end{center}
 \noindent
 and we similarly define $\alpha_{M_{i}}$. Finally, we define $$\alpha ={\rm Max} \{\alpha_{N_1},\dots ,\alpha_{N_r},\alpha_{M_1},\dots \alpha_{M_r}\}.$$ We shall show that any potential signature whose tail length $s$ satisfies $s>r(N+\alpha)$ is always an actual signature by constructing an explicit generating vector, and hence the tail of a non-signature is bounded. 
 
Fix a potential signature $(h;m_1,\dots m_s)=(h; [n_1, t_1], [n_2, t_2], \ldots, [n_r, t_r])$, with $s>r(N+\alpha)$. Since $s>r(N+\alpha)$, there exists an $n_i$ such that the corresponding $t_i> {\rm Max} (N_i+\alpha_{N_{i}}, M_i+\alpha_{M_{i}})$.  Without loss of generality, assume that $t_r> {\rm Max} (N_r+\alpha_{N_{r}}, M_r+\alpha_{M_{r}})$ and rewrite the signature $(h;m_1,\dots m_s)$ as $(h;m_1,\dots m_l,[n_r,t_r])$.  Let $x_1,\dots ,x_l \in G$ denote elements of orders $m_1,\dots ,m_l$ respectively, and let $x=x_1\cdots x_l$. Since $g_1,\dots ,g_{N_{r}}$ generate $G$, $x^{-1}$ can be written as a word $\mathbb{W}=a_1\cdots a_m$ in $g_1,\dots ,g_{N_{r}}$ where $m<\alpha$. 

Letting $y$ denote an arbitrary element of order $n_r$ in $G$, if $s-l-m$ is odd, we can construct the vector: $$\left(\underbrace{e,\dots ,e}_{2h \text{ times}},  x_1,\dots ,x_{l}, a_1,\dots ,a_m, \underbrace{y,y^{-1},\dots ,y,y^{-1}}_{(s-l-m-N_r)/2 \text{ times}}, g_1,\dots g_{N_r} \right)$$ and if $s-l-m$ is even, we can construct the vector $$\left(\underbrace{e,\dots ,e}_{2h \text{ times}},  x_1,\dots ,x_{l}, a_1,\dots ,a_m, \underbrace{y,y^{-1},\dots ,y,y^{-1}}_{(s-l-m-M_r)/2 \text{ times}}, g_1,\dots g_{M_r} \right).$$ In each case, clearly $G$ is generated by this set of group elements, and by construction, the relation in 2(c) from Theorem \ref{th-riem} is satisfied by both. In particular, this vector is a generating vector for $G$, so it follows that the length of a tail of a non-signature for $G$ is bounded. 

\end{proof}

Though we shall consider other families of groups which are AAS in more detail in the next section, Theorem  \ref{Th-cond} immediately implies that there is one special family of groups that is always AAS.

\begin{Th}
\label{cor-simple}
A non-Abelian finite simple group is AAS.
\end{Th}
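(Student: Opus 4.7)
The plan is to apply Theorem \ref{Th-cond} directly: I will verify that any non-Abelian finite simple group $G$ satisfies both conditions (1) and (2) of that theorem, which together are sufficient for $G$ to be AAS.

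For condition (1), I would observe that the commutator subgroup $[G,G]$ is a normal subgroup of $G$. Since $G$ is simple, $[G,G]$ is either trivial or all of $G$. If $[G,G]$ were trivial then $G$ would be Abelian, contradicting our assumption, so $[G,G]=G$. Condition (1) then holds trivially: for every $n_i\in\mathcal{O}(G)$, the group $G=[G,G]$ contains an element of order $n_i$ by definition of $\mathcal{O}(G)$.

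For condition (2), I would fix $n_i\in\mathcal{O}(G)$ and let $N_i$ be the subgroup of $G$ generated by all elements of order $n_i$. Because the set of elements of a given order is preserved by every automorphism of $G$ (in particular by every inner automorphism), $N_i$ is a characteristic subgroup, hence normal in $G$. Since $G$ is simple, $N_i\in\{\langle e_G\rangle, G\}$. But $n_i\in\mathcal{O}(G)$ guarantees that $G$ contains at least one element of order $n_i$, so $N_i\neq\langle e_G\rangle$, forcing $N_i=G$. Thus $G$ is generated by its elements of order $n_i$, which is condition (2).

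Both conditions of Theorem \ref{Th-cond} are therefore met, and the conclusion follows immediately. There is no real obstacle here: the entire argument is a straightforward exploitation of simplicity (to force normal subgroups to be trivial or the whole group) combined with the non-Abelian hypothesis (to rule out the trivial case for $[G,G]$). The nontrivial content has already been packaged into Theorem \ref{Th-cond}.
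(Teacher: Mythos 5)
Your proposal is correct and follows the same route as the paper: both verify the two conditions of Theorem \ref{Th-cond}, using simplicity to force $[G,G]=G$ and to force the (normal) subgroup generated by elements of order $n_i$ to be all of $G$. The only cosmetic difference is that the paper takes the subgroup generated by a single conjugacy class of elements of order $n_i$, while you take the characteristic subgroup generated by all such elements; either works.
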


\begin{proof}
Since $G$ is simple, its commutator subgroup is all of $G$, hence condition (1) of Theorem \ref{Th-cond} is satisfied. For condition (2), observe that for a given $n_i$, if we let $H$ be the subgroup generated by a conjugacy class of elements of order $n_i$, then it is necessarily normal, and hence equal to $G$. 

\end{proof}

\section{\label{sec-fam}Families of AAS Groups}

Non-Abelian simple groups are just one family which satisfy the the two conditions given in Theorem \ref{Th-cond}. In this section, we look at AAS group more generally. We start with the following result which shows that AAS groups fall into two very different categories.

\begin{Pn}
\label{Pn-perfectp}
If a group $G$ is AAS, then it is either a non-Abelian $p$-group, or a perfect group.
\end{Pn}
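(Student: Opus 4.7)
The plan is to use Theorem \ref{Th-cond} directly and play two different prime orders against each other in the abelianization $G/[G,G]$. Assume $G$ is AAS. By the remark following Lemma \ref{le-nec}, $G$ is non-Abelian. So it suffices to show: if $G$ is not a $p$-group for any prime $p$, then $G$ is perfect.

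First I would observe that if $|G|$ has at least two distinct prime divisors $p \neq q$, then by Cauchy's theorem both $p$ and $q$ lie in $\mathcal{O}(G)$. Next I would apply condition (2) of Theorem \ref{Th-cond} to each of these primes: $G$ is generated by elements of order $p$, and $G$ is also generated by elements of order $q$. Passing to the abelianization $\overline{G} = G/[G,G]$, the images of these generators still generate $\overline{G}$. An element of order $p$ in $G$ maps to an element of order dividing $p$ in $\overline{G}$, so $\overline{G}$ is generated by elements of order dividing $p$; since $\overline{G}$ is a finite Abelian group, this forces $\overline{G}$ to be a $p$-group. By the symmetric argument with $q$, the group $\overline{G}$ is also a $q$-group.

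The key step is then to observe that a finite group which is simultaneously a $p$-group and a $q$-group for distinct primes $p,q$ must be trivial. Hence $\overline{G} = \{e_G\}$, i.e.\ $G = [G,G]$, and $G$ is perfect. Combined with the $p$-group alternative when $|G|$ is a prime power, this completes the dichotomy.

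There is no real obstacle here beyond correctly deploying Theorem \ref{Th-cond}; the mild subtlety to be careful with is that condition (1) of Theorem \ref{Th-cond} is not needed for this argument, only condition (2) together with the existence of elements of prime order from two different primes when $G$ is not a $p$-group. I would make sure to separate the two cases cleanly so that the statement "non-Abelian $p$-group or perfect" is not read as exclusive (a non-Abelian $p$-group could in principle be perfect, though in fact finite $p$-groups are never perfect, so the two cases are actually disjoint — worth a brief aside but not essential to the proof).
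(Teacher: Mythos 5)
Your proposal is correct and follows essentially the same route as the paper: both use condition (2) of Theorem \ref{Th-cond} for two distinct primes $p$ and $q$ dividing $|G|$ to force the abelianization $G/[G,G]$ to be simultaneously a $p$-group and a $q$-group, hence trivial. Your explicit appeal to Cauchy's theorem and the remark that images of order-$p$ elements have order \emph{dividing} $p$ are minor refinements of the same argument.
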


\begin{proof}
Suppose that $G$ is AAS. If $G$ is a $p$-group, then condition $(1)$ of Theorem \ref{Th-cond} ensures that $G$ is not Abelian. 

Now suppose that $G$ is not a $p$-group and let $p$ and $q$ be distinct primes dividing $|G|$. Since $G$ is AAS, by condition (2) of Theorem \ref{Th-cond}, $G$ must be generated by elements of order $p$. It follows that $G/[G,G]$ must also be generated by elements of order $p$ and so must be elementary Abelian of order $p^k$ for some $k$. By identical reasoning, since $G$ is also generated by elements of order $q$, $G/[G,G]$ must be elementary Abelian of order $q^s$ for some $s$. It follows that $G/[G,G]$ must be trivial, and in particular, $G$ is perfect. 
\end{proof}

Proposition \ref{Pn-perfectp} ensures that any AAS group is either a perfect group or non-Abelian $p$-group, so this leads to the obvious question of whether it is sufficient to be in one of these families. Unfortunately this is not the case. Though we shall see in the following that it is quite easy to generate families which are AAS, or are not AAS, determining a unified statement about such groups seems to be a very difficult problem. With this in mind, our goal here is to provide evidence, both computationally and through explicit examples of families, to motivate further study toward such a  unified statement.

\subsection{$p$-groups}

We start by looking at $p$-groups. The first few results illustrate both infinite families which are AAS and infinite families which are not. 

\begin{Pn}
\label{Pn-pgroupbigexp}
A non-Abelian $p$-group of order $p^n$ and exponent $p^{n-1}$ is never AAS. 
\end{Pn}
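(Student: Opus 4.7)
The plan is to show that if $G$ is a non-Abelian $p$-group of order $p^n$ with exponent $p^{n-1}$, then condition (1) of Theorem \ref{Th-cond} must fail: the order $p^{n-1}$ lies in $\mathcal{O}(G)$, but $[G,G]$ contains no element of that order.

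First, since all groups of order $p$ or $p^2$ are Abelian, we must have $n \geq 3$. The hypothesis on the exponent produces an element $x \in G$ of order $p^{n-1}$, whose cyclic span $C=\langle x\rangle$ has index $p$ in $G$. Because $p$ is the smallest prime dividing $|G|$, $C$ is normal, and $G/C$ is cyclic of order $p$, hence Abelian. This already gives $[G,G]\leq C$.

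The crux is to show $[G,G]$ is strictly smaller than $C$, i.e., that $|[G,G]|\leq p^{n-2}$. Suppose otherwise; then $[G,G]=C$ and $G/[G,G]$ is cyclic of order $p$. For a $p$-group, a cyclic abelianization forces the whole group to be cyclic: since $[G,G]$ is contained in the Frattini subgroup $\Phi(G)$, the Frattini quotient $G/\Phi(G)$ is a further quotient of $G/[G,G]$ of exponent $p$, hence cyclic of order $p$, and by the Burnside basis theorem $G$ is generated by a single element. This contradicts $G$ being non-Abelian, so $|[G,G]|\leq p^{n-2}$, and in particular $[G,G]$ has no element of order $p^{n-1}$.

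Since $p^{n-1} \in \mathcal{O}(G)$, this violates condition (1) of Theorem \ref{Th-cond}, so $G$ is not AAS. The only step requiring real input beyond the structure of $p$-groups with a cyclic maximal subgroup is the Frattini/Burnside basis argument used to exclude a cyclic abelianization; this is the main (and only mild) obstacle, and it is a standard fact about nilpotent groups.
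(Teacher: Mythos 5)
Your proof is correct, but it reaches the key bound $|[G,G]|\leq p^{n-2}$ by a different route than the paper. The paper simply invokes the fact that a $p$-group of order $p^n$ has a normal subgroup $H$ of order $p^{n-2}$; the quotient $G/H$ then has order $p^2$, hence is Abelian, so $[G,G]\leq H$ and cannot contain an element of order $p^{n-1}$. You instead work with the cyclic maximal subgroup $C=\langle x\rangle$ of order $p^{n-1}$, note $[G,G]\leq C$, and then rule out equality via the Frattini subgroup and the Burnside basis theorem (a non-cyclic $p$-group cannot have cyclic abelianization). Both arguments are sound and of comparable length; the paper's is slightly more elementary, needing only that groups of order $p^2$ are Abelian, while yours exploits the specific structure forced by the exponent hypothesis (a cyclic subgroup of index $p$) and so makes visible exactly where the derived subgroup sits inside $C$. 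Either way the conclusion is the same: $p^{n-1}\in\mathcal{O}(G)$ but $[G,G]$ has no element of that order, so condition (1) of Theorem \ref{Th-cond} fails.
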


\begin{proof}
Suppose $G$ is a $p$-group of order $p^n$. Since $p$-groups have a normal subgroup of every possible order, let $H$ be a normal subgroup of $G$ of order $p^{n-2}$. Then the quotient $G/H$ is Abelian so the derived subgroup $[G,G]$ is a subgroup of $H$. However, since $H$ has order $p^{n-2}$, it cannot contain an element of order $p^{n-1}$, so neither does the derived subgroup $[G,G]$. Hence $G$ fails condition (1) of  Theorem \ref{Th-cond}.

\end{proof}

From this Proposition, we can exhibit  a larger family  (that includes the family of Theorem 5.2)  of non-Abelian $p$-groups which are not AAS.

\begin{Co}
\label{Co-pgroupex}
No metacyclic $p$-groups are AAS. 
\end{Co}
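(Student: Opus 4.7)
The plan is to reduce to condition (1) of Theorem \ref{Th-cond}: for any metacyclic $p$-group $G$, the derived subgroup $[G,G]$ will turn out to be a proper subgroup of the cyclic normal subgroup, hence too small to contain an element of the largest order in $\mathcal{O}(G)$. Abelian metacyclic $p$-groups are ruled out immediately (trivial derived subgroup, non-empty order set), so the substance is in the non-Abelian case.

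Fix a cyclic normal subgroup $N = \langle a\rangle$ of order $p^m$ in $G$ with $G/N$ cyclic, and pick $b \in G$ whose coset generates $G/N$, so that $G = \langle a, b\rangle$ and $b^{-1}ab = a^r$ for some integer $r \not\equiv 1 \pmod{p^m}$. The first key observation is that $p$ divides $r-1$: for odd $p$ this is because conjugation by $b$ is a $p$-power order automorphism of $\mathbb{Z}/p^m$, hence lies in the $p$-Sylow $\{1+kp\}$ of $(\mathbb{Z}/p^m)^*$; for $p = 2$, the weaker statement still holds since every element of $(\mathbb{Z}/2^m)^*$ is odd.

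Next, I would compute $[G,G]$ explicitly. The containment $[G,G] \subseteq N$ follows from $G/N$ being Abelian, and since $G$ is $2$-generated, $[G,G]$ is the normal closure of $[a,b] = a^{-1}(b^{-1}ab) = a^{r-1}$. The $G$-conjugates $b^{-j}a^{r-1}b^{j} = a^{(r-1)r^j}$ all generate the same subgroup of $N$ as $a^{r-1}$ because $r^j$ is a unit modulo $p^m$, and powers of $a$ commute with $a^{r-1}$, so $[G,G] = \langle a^{r-1}\rangle$. Combined with the divisibility $p \mid r-1$, this forces $[G,G] \subseteq \langle a^p\rangle$ and hence $|[G,G]| \leq p^{m-1}$.

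Finally, $a$ has order $p^m$, so $p^m \in \mathcal{O}(G)$, yet $[G,G]$ has order at most $p^{m-1}$ and so contains no element of order $p^m$. This violates condition (1) of Theorem \ref{Th-cond}, so $G$ is not AAS. The main point to get right is the uniform treatment of $p = 2$ and odd $p$ in showing $p \mid r-1$; note that one cannot simply invoke Proposition \ref{Pn-pgroupbigexp}, since not every metacyclic $p$-group has exponent $p^{n-1}$ (for example $\mathbb{Z}/9 \rtimes \mathbb{Z}/9$ is metacyclic of order $81$ with exponent only $9$), so the direct derived-subgroup analysis above is the cleaner route.
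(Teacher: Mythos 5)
Your proof is correct, but it takes a genuinely different route from the paper's. The paper argues by contradiction from condition (2) of Theorem \ref{Th-cond}: if $G$ were AAS it would be generated by elements of order $p$, forcing the cyclic quotient $G/C$ to have order $p$; then $C$ has index $p$, so $G$ has exponent $p^{n}$ (hence is Abelian) or $p^{n-1}$, and the latter case is handled by Proposition \ref{Pn-pgroupbigexp}. You instead compute the derived subgroup directly: $[G,G]$ is the normal closure of $[a,b]=a^{r-1}$, which equals $\langle a^{r-1}\rangle$, and the observation that conjugation by $b$ induces an automorphism of $p$-power order forces $r\equiv 1 \pmod p$, so $[G,G]\leq \langle a^p\rangle$ contains no element of order $p^m=|a|$ and condition (1) of Theorem \ref{Th-cond} fails outright. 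Your argument is self-contained (it never invokes condition (2) or Proposition \ref{Pn-pgroupbigexp}) and yields the slightly stronger information that every non-Abelian metacyclic $p$-group already violates condition (1) at the specific order $|a|$; the price is the bookkeeping in $(\mathbb{Z}/p^m)^{*}$ and the separate treatment of $p=2$, both of which you handle correctly. The paper's proof is shorter given Proposition \ref{Pn-pgroupbigexp}, but it needs both conditions of the characterization. Your closing caveat is also apt: Proposition \ref{Pn-pgroupbigexp} alone does not cover all metacyclic $p$-groups (your example of order $81$ with exponent $9$ shows this), which is precisely why the paper must first use condition (2) to force the index of $C$ to be $p$ before that proposition can be applied.
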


\begin{proof}
Suppose that $G$ is an AAS metacyclic $p$-group of order $p^n$ with normal cyclic subgroup $C$ and cyclic quotient $G/C$.  Now by assumption, $G$ is generated by elements of order $p$, so it follows that $G/C$ is also generated by elements of order $p$. Therefore, $G/C$ is cyclic of order $p$ and $C$ has index $p$ in $G$. It follows that either $G$ has exponent $p^n$ or $p^{n-1}$. In the first case, $G$ is Abelian, so cannot be AAS, and in the second case, Proposition \ref{Pn-pgroupbigexp} shows $G$ cannot be AAS. 
\end{proof}

Proposition \ref{Pn-pgroupbigexp} and Corollary \ref{Co-pgroupex} illustrate specific examples of groups which are not AAS. It is also fairly straightforward to construct explicit examples of AAS groups. Since any non-Abelian $p$-group of exponent $p$ must, by definition, be generated by elements of order $p$, and has a non-trivial commutator containing at least one element of order $p$, we can conclude that 

\begin{Pn}
\label{Pn-pgroupexp}
A non-Abelian $p$-group of exponent $p$ is AAS. 
\end{Pn}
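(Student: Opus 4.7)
The plan is to invoke Theorem \ref{Th-cond} directly and verify its two conditions, both of which become nearly tautological under the exponent $p$ assumption. The key observation is that if $G$ has exponent $p$, then every non-identity element of $G$ has order exactly $p$, so the order set collapses to the singleton $\mathcal{O}(G) = \{p\}$. This means there is only one value of $n_i$ to check for each condition.

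For condition (2) of Theorem \ref{Th-cond}, I would note that $G$ is generated by its non-identity elements (trivially), and under the exponent $p$ hypothesis, every such generator already has order $p$. Hence $G$ is generated by elements of order $p$, which is the only order in $\mathcal{O}(G)$.

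For condition (1), I would use the non-Abelian hypothesis to conclude that $[G,G]$ is a non-trivial subgroup of $G$. Pick any $g \in [G,G] \setminus \{e_G\}$; since $g \in G$ and $G$ has exponent $p$, the element $g$ has order $p$. Thus $[G,G]$ contains an element of order equal to the unique element of $\mathcal{O}(G)$.

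There is essentially no main obstacle in this argument: the exponent $p$ condition trivializes the order set, and the non-Abelian assumption immediately supplies the required commutator element. The entire content of the proposition is that the two hypotheses of Theorem \ref{Th-cond} are each a direct consequence of one of the standing assumptions. So the "proof" is really a two-sentence verification, and the only thing to be careful about is stating clearly why $\mathcal{O}(G)=\{p\}$ and why non-Abelianness of a $p$-group ensures $[G,G] \neq \{e_G\}$ (which is just the definition of non-Abelian).
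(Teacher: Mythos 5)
Your proposal is correct and matches the paper's reasoning exactly: the paper justifies this proposition in the sentence immediately preceding it, noting that exponent $p$ forces $G$ to be generated by elements of order $p$ and that non-Abelianness gives a non-trivial commutator subgroup whose non-identity elements all have order $p$, so both conditions of Theorem \ref{Th-cond} hold. No differences worth noting.
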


The following useful result shows that once we find $p$-groups that are AAS, it is easy to generate more. 

\begin{Th}
\label{Th-pgroups}
Let $G$ be a p-group of exponent $p^e$ which is AAS.  Let $H$ be any other p-group of exponent $p^d$ where $d\leq e$.  Suppose that $H$ is generated by elements of order $p$.  Then $G\times H$ is AAS.  
\end{Th}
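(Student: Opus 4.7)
The plan is to verify that $G\times H$ satisfies the two conditions of Theorem \ref{Th-cond}, from which the conclusion will follow. The first step is to pin down $\mathcal{O}(G\times H)$. Since both factors are $p$-groups, every nontrivial element of $G\times H$ has order a power of $p$, and since a $p$-group of exponent $p^k$ contains an element of every order $p^j$ with $1\leq j\leq k$, together with the hypothesis $d\leq e$ this gives $\mathcal{O}(G\times H)=\mathcal{O}(G)\cup\mathcal{O}(H)=\{p,p^2,\dots,p^e\}$.

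For condition (1), I would observe that $[G\times H,G\times H]=[G,G]\times[H,H]$. Because $G$ is AAS, Lemma \ref{le-nec} furnishes, for each $p^k\in\mathcal{O}(G)=\mathcal{O}(G\times H)$, an element $x\in[G,G]$ of order $p^k$; then $(x,e_H)$ lies in the derived subgroup of $G\times H$ and has order $p^k$, handling every order in the order set.

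For condition (2), I would fix $p^k\in\mathcal{O}(G\times H)$, use that $G$ is AAS to pick a generating set $g_1,\dots,g_m$ of $G$ with each $g_i$ of order $p^k$, and let $h_1,\dots,h_n$ be a generating set of $H$ with each $h_j$ of order $p$ (provided by the hypothesis on $H$). I claim the list
\[
(g_1,e_H),\dots,(g_m,e_H),\;(g_1,h_1),\dots,(g_1,h_n)
\]
consists of elements of order $p^k$ and generates $G\times H$. Each $(g_i,e_H)$ plainly has order $p^k$, and each $(g_1,h_j)$ has order $\max(p^k,p)=p^k$ because $k\geq 1$. Multiplying $(g_1,h_j)$ by $(g_1,e_H)^{-1}$ yields $(e_G,h_j)$, so the subgroup generated contains both $G\times\{e_H\}$ and $\{e_G\}\times H$, hence all of $G\times H$.

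The subtle point in the argument, and the reason the hypothesis $d\leq e$ is required, will be the subcase $p^k>p^d$: there $H$ has no elements of order $p^k$, so one cannot hope to build order-$p^k$ generators supported on $H$ alone, and must instead exploit the first coordinate in $G$ to inflate the order. The uniform recipe above dispatches this subcase alongside the easier case $p^k\leq p^d$, and Theorem \ref{Th-cond} then yields that $G\times H$ is AAS.
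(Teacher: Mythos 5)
Your proposal is correct and follows essentially the same route as the paper: both verify the two conditions of Theorem \ref{Th-cond} using $[G\times H,G\times H]=[G,G]\times[H,H]$ for condition (1) and the generating set $(g_i,e_H)$ together with $(g_1,h_j)$ for condition (2). Your explicit identification of $\mathcal{O}(G\times H)=\mathcal{O}(G)$ is a small added clarification, but the argument is the same.
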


\begin{proof}
First note that the commutator subgroup of $G\times H$ is the direct product of $[G,G]$ and $[H,H]$. In particular, since $[G,G]$ contains elements of all possible orders and the order of elements of $H$ are a subset of the orders of $G$, it follows that the commutator subgroup of $G\times H$ must contain elements of all possible orders.  Hence Condition (1) of Theorem \ref{Th-cond} is satisfied. 

Now suppose that $p^i\in \mathcal{O}(G)$. By assumption, $G$ is generated by elements of order $p^i$, so let $g_1,\dots ,g_k$ be such a set of generators. Also, by assumption, $H$ is generated by elements of order $p$, so let $h_1,\dots ,h_r$ be such a set of generating vectors. Then it follows that the set $(g_i,e_H)$, $1\leq i\leq k$ ($e_H$ the identity of $H$) and $(g_1,h_j)$, $1\leq j\leq r$, all of which have order $p^i$ generate $G\times H$. Hence condition (2) of Theorem \ref{Th-cond} is satisfied. 

\end{proof}

The following is immediate.

\begin{Co}
\label{cor-pgroups}
The product of two AAS $p$-groups is an AAS $p$-group.
\end{Co}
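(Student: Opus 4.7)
The plan is to deduce this directly from Theorem \ref{Th-pgroups} with essentially no additional work, which is why the authors flag the corollary as immediate. Let $G_1$ and $G_2$ be two AAS $p$-groups of exponents $p^{e_1}$ and $p^{e_2}$ respectively. After relabeling, I would assume without loss of generality that $e_2 \leq e_1$, so that $G_1$ plays the role of $G$ in Theorem \ref{Th-pgroups} and $G_2$ plays the role of $H$.

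To invoke Theorem \ref{Th-pgroups}, the only hypothesis that needs checking beyond the exponent inequality is that $G_2$ is generated by elements of order $p$. This is where I would use that $G_2$ is AAS: since $G_2$ is a non-trivial $p$-group, Cauchy's theorem gives $p \in \mathcal{O}(G_2)$, and then condition (2) of Theorem \ref{Th-cond} applied to $n_i = p$ says exactly that $G_2$ is generated by its elements of order $p$. With this verified, Theorem \ref{Th-pgroups} yields that $G_1 \times G_2$ is AAS, and of course $G_1 \times G_2$ is again a $p$-group.

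There is really no main obstacle here; the content of the statement has already been absorbed into Theorem \ref{Th-pgroups}, and the corollary is just the symmetric packaging of that theorem once one notices that any AAS $p$-group automatically satisfies the auxiliary hypothesis imposed on $H$. The only thing to be careful about is the ordering of exponents, which is handled by the symmetric roles of the two factors.
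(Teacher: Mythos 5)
Your proposal is correct and matches the paper's intent exactly: the paper states the corollary is immediate from Theorem \ref{Th-pgroups}, and your derivation (ordering the exponents, then using condition (2) of Theorem \ref{Th-cond} with $n_i=p$ to verify that the smaller-exponent factor is generated by elements of order $p$) is precisely the routine verification being left to the reader.
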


The importance of Theorem \ref{Th-pgroups} and Corollary \ref{cor-pgroups} is that once we have found a single AAS $p$-group of any exponent, we can generate infinitely many other AAS $p$-groups of that same exponent simply by forming direct products. This suggests that AAS groups should appear quite regularly as the order of the group increases. Indeed, for $p\neq 2$, there exists at least one AAS $p$-group of order $p^n$ for $n\geq 3$ constructed by taking a direct product of the non-Abelian $p$-group of order $p^3$ (AAS by Proposition \ref{Pn-pgroupexp}) with an appropriate elementary Abelian $p$-group.

Though the frequency with which they appear should increase as the group order increases, how it increases relative to the growth rate of $p$-groups is not clear from our current work. The available computational data on $p$-groups is limited as the orders of the groups quickly get beyond the capabilities of computer algebra systems so it is difficult to determine reasonable conjecture from computational data. Therefore, we venture that an interesting line of study would be of the asymptotic limit $$\lim_{n\rightarrow \infty} (\# \text{AAS $p$-groups of order $p^n$})/ (\#\text{non-Abelian $p$-groups of order $p^n$}).$$

In Magma \cite{magma}, we checked all $p$ groups for $p\in \{2,3,5,7\}$ up to exponent $9$ for $p=2$, and up to exponent $7$ for the odd primes, and found only a small handful of AAS examples not covered by Proposition \ref{Pn-pgroupexp}.

\subsection{Perfect Groups} As with $p$-groups, we shall illustrate specific examples of perfect groups which are and are not AAS. We already know from Theorem \ref{cor-simple} that all non-Abelian finite simple groups are a family of perfect groups which are AAS, but not all perfect groups are AAS. 

\begin{Pn}
\label{Pn-perfectgroupex}
$SL(2,q)$ for $q$ odd is not AAS.
\end{Pn}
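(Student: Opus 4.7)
The plan is to apply Theorem \ref{Th-cond} and show that condition (2) fails for the value $n_i = 2$. Specifically, I would argue that for $q$ odd the subgroup of $SL(2,q)$ generated by elements of order $2$ is not the full group, because there are essentially no order-$2$ elements to work with.

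First I would verify that $2 \in \mathcal{O}(SL(2,q))$: the scalar matrix $-I$ has determinant $(-1)^2 = 1$ and order $2$, so it lies in $SL(2,q)$. Next, I would show that $-I$ is in fact the \emph{only} element of order $2$. The argument is standard: if $A \in SL(2,q)$ satisfies $A^2 = I$, then the minimal polynomial of $A$ divides $x^2 - 1 = (x-1)(x+1)$, and since $q$ is odd these factors are coprime, so $A$ is diagonalizable with eigenvalues in $\{1,-1\}$. The constraint $\det A = 1$ rules out the eigenvalue multiset $\{1,-1\}$, leaving only $A = I$ or $A = -I$.

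Having established uniqueness of the involution, I would finish by observing that $-I$ is central in $SL(2,q)$. Consequently the subgroup of $SL(2,q)$ generated by all elements of order $2$ is $\langle -I \rangle$, which has order $2$. Since $|SL(2,q)| > 2$ for odd $q \geq 3$, this is a proper subgroup, so $SL(2,q)$ is not generated by its elements of order $2$. By Theorem \ref{Th-cond}, $SL(2,q)$ is not AAS.

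There is no serious obstacle here: the whole argument rests on the well-known fact that $-I$ is the unique involution of $SL(2,q)$ in odd characteristic, which in turn reduces to a one-line linear algebra calculation. The only mild subtlety is remembering to check that order $2$ actually appears in $\mathcal{O}(SL(2,q))$ in the first place, so that condition (2) of Theorem \ref{Th-cond} has content at $n_i = 2$.
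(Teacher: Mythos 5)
Your proof is correct and takes essentially the same route as the paper: both argue that $SL(2,q)$ for $q$ odd has a unique involution (the central element $-I$), so it cannot be generated by its elements of order $2$, violating condition (2) of Theorem \ref{Th-cond}. You simply supply the linear-algebra details that the paper leaves implicit.
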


\begin{proof}
Each such group is a perfect group. However, for each such group, there is a unique subgroup of order $2$, hence $SL(2,q)$ cannot be generated by elements of order $2$.
\end{proof}

Conversely, it is not necessary for a perfect AAS group to be simple.

\begin{Ex}
\label{Ex-perfectgroupex}
The two perfect groups of order 960, and the perfect group of order 1080 are all non-simple AAS groups. One of the groups of order 960 is the group of inner automorphisms of the wreath product of $C_2$ and $A_5$.  Up to order $2000$, the only other perfect non-simple groups with AAS are the four perfect groups of order 1344, and three of the eight perfect groups of order $1920$.
\end{Ex}

Moreover, as with $p$-groups,  once we have found a perfect AAS group, it is easy to construct more.

\begin{Le}
\label{Le-Easy}
Suppose that $G$ and $H$ are AAS, $g\in G$, $h\in H$, $n\in \mathcal{O}(G)$ and $m\in \mathcal{O}(H)$. Then there exists an integer $s$ such that $g$ can be written as a product of $s$ elements of order $n$ from $G$ and $h$  can be written as a product of $s$ elements of order $m$ from $H$. 

\end{Le}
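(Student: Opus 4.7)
The plan is to write $g$ as a product of some number of order-$n$ elements from $G$ and $h$ as a product of some number of order-$m$ elements from $H$, and then to pad both expressions simultaneously until they have a common length. Condition (2) of Theorem \ref{Th-cond} provides the initial expressions, and Lemma \ref{le-odd} provides the padding tool.

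First, I would use the fact that $G$ is AAS: by condition (2) of Theorem \ref{Th-cond}, $G$ is generated by elements of order $n$. Since any $n_i \in \mathcal{O}(G)$ satisfies $n_i \geq 2$, the inverse of an order-$n$ element again has order $n$, so every element of $G$, and in particular $g$, can be expressed as a product of finitely many order-$n$ elements. Fix such an expression $g = g_1 \cdots g_{s_g}$ of length $s_g$. In the same way, fix an expression $h = h_1 \cdots h_{s_h}$ as a product of order-$m$ elements of $H$.

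Next I would build two kinds of ``identity inserts.'' Choose any $y \in G$ of order $n$; then $y \cdot y^{-1} = e_G$ is a length-$2$ product of order-$n$ elements. By Lemma \ref{le-odd} applied to $G$ and the order $n$, there is also an odd integer $L_n$ and a length-$L_n$ product of order-$n$ elements equal to $e_G$. Inserting $a$ copies of the first and $b$ copies of the second into the expression for $g$ produces, for any $a,b \geq 0$, a representation of $g$ as a product of exactly $s_g + 2a + bL_n$ elements of order $n$. Since $L_n$ is odd we have $\gcd(2,L_n)=1$, so by the standard Frobenius/coin-problem argument every sufficiently large integer $N$ is representable as $2a + bL_n$ with $a,b \geq 0$; hence $g$ is a product of exactly $s_g + N$ order-$n$ elements for all sufficiently large $N$. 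Applying the same construction in $H$ with the corresponding odd integer $L_m$ and some $y' \in H$ of order $m$, we get that $h$ is a product of exactly $s_h + N'$ order-$m$ elements for all sufficiently large $N'$.

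Finally, I would pick any integer $s$ large enough that both $s - s_g$ and $s - s_h$ lie in the numerical semigroup $\langle 2, L_n\rangle$, respectively $\langle 2, L_m\rangle$ (e.g.\ any $s \geq \max(s_g,s_h) + L_n L_m$ will do). Such an $s$ works simultaneously for $g$ and $h$, which is the statement of the lemma. The only nontrivial step is the parity matching, and it is handled precisely by the odd-length relations guaranteed by Lemma \ref{le-odd}; without them one could only pad by $2$ and the parities of $s_g$ and $s_h$ might never be reconciled.
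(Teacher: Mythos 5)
Your proof is correct and follows essentially the same route as the paper's: both obtain initial expressions from generation by order-$n$ (resp.\ order-$m$) elements, invoke Lemma \ref{le-odd} to overcome the parity obstruction, and pad with pairs $yy^{-1}$ to equalize lengths. The only cosmetic difference is that you phrase the padding via the numerical semigroup $\langle 2, L_n\rangle$ and a Frobenius-type argument, whereas the paper first normalizes both expressions to even length and then pads by twos.
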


\begin{proof}
Since $G$ is AAS, we know that $g$ can be written as a product of elements of order $n$ from $G$. Moreover, we can assume this product has even length, since otherwise we can concatenate with an odd length product equal to the identity which exists by Lemma \ref{le-odd}. Likewise  $h$ can be written as a product of even length of elements of order $m$ from $H$. Assuming the length of the product equaling $g$ is less than or equal to the length equaling $h$, we can  concatenate an appropriate number of products of pairs $g_ig_i^{-1}$ for $g_i$ of order $n$ until the lengths are the same. 
\end{proof}

\begin{Th}
\label{Th-perfectgroups}
If $G$, $H$ are AAS groups and $|G|$ and $|H|$ have the same primes in their factorization, then $G\times H$ is AAS.  
\end{Th}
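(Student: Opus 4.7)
The plan is to verify the two conditions of Theorem~\ref{Th-cond} for $G\times H$. Throughout I will use the standard identifications $[G\times H, G\times H]=[G,G]\times[H,H]$ and $|(g,h)|=\mathrm{lcm}(|g|,|h|)$; the shared-primes hypothesis will be needed in only one place, namely for the decomposition step below.

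The first step is to show that every $n\in\mathcal{O}(G\times H)$ admits a decomposition $n=\mathrm{lcm}(a,b)$ with $a\in\mathcal{O}(G)$ and $b\in\mathcal{O}(H)$ both nontrivial. If $n\in\mathcal{O}(G)\cap\mathcal{O}(H)$, take $a=b=n$. If $n\in\mathcal{O}(G)\setminus\mathcal{O}(H)$, take $a=n$ and let $b$ be any prime divisor of $n$; because every prime dividing $|G|$ also divides $|H|$, Cauchy's theorem places $b$ in $\mathcal{O}(H)$. The case $n\in\mathcal{O}(H)\setminus\mathcal{O}(G)$ is symmetric, and if $n$ lies in neither set then $n=\mathrm{lcm}(|g|,|h|)$ for some $(g,h)\in G\times H$ in which both $|g|$ and $|h|$ are nontrivial proper divisors of $n$, so we set $a=|g|$ and $b=|h|$.

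With the decomposition in hand, condition (1) of Theorem~\ref{Th-cond} is almost immediate: since $G$ and $H$ are AAS, one can choose $\alpha\in[G,G]$ of order $a$ and $\beta\in[H,H]$ of order $b$, and then $(\alpha,\beta)\in[G,G]\times[H,H]$ has order $n$. For condition (2) I would choose generators $g_1,\dots,g_k$ of $G$ of order $a$ and $h_1,\dots,h_\ell$ of $H$ of order $b$, and let $N\leq G\times H$ be the subgroup generated by all elements of order $n$. It suffices to show $(g_i,e_H)\in N$ and $(e_G,h_j)\in N$ for all $i,j$, since then $N\supseteq(G\times\{e_H\})\cup(\{e_G\}\times H)$ and hence $N=G\times H$. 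Here Lemma~\ref{Le-Easy} does the work: applied to $g_i\in G$ and $e_H\in H$ (with orders $a$ and $b$) it furnishes an integer $s$ and factorizations $g_i=x_1\cdots x_s$ and $e_H=y_1\cdots y_s$ with $|x_u|=a$ and $|y_u|=b$, whence $\prod_{u=1}^{s}(x_u,y_u)=(g_i,e_H)$ and each factor $(x_u,y_u)$ has order $\mathrm{lcm}(a,b)=n$. A symmetric argument places each $(e_G,h_j)$ in $N$.

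The main obstacle is precisely the decomposition step when $n$ lies in only one of $\mathcal{O}(G)$, $\mathcal{O}(H)$: without the shared-prime hypothesis there might be no nontrivial $b\in\mathcal{O}(H)$ dividing $n$, in which case every order-$n$ element of $G\times H$ would sit inside $G\times\{e_H\}$ and $N$ would miss the $H$-factor entirely. Once that obstruction is disposed of by invoking Cauchy's theorem on a shared prime, Lemma~\ref{Le-Easy} converts the coordinate-wise expressions into order-$n$ factorizations and the rest of the argument is routine.
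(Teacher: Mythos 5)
Your proof is correct and follows essentially the same route as the paper's: decompose $n=\mathrm{lcm}(a,b)$ with both parts nontrivial (invoking Cauchy's theorem and the shared-primes hypothesis in the degenerate case), then use Lemma~\ref{Le-Easy} to synchronize the lengths of the coordinatewise factorizations so that each factor $(x_u,y_u)$ has order $n$. The only cosmetic differences are that you verify condition (1) by exhibiting an element of $[G,G]\times[H,H]$ directly rather than citing that direct products of perfect groups are perfect, and you check condition (2) on a generating set rather than on an arbitrary element of $G\times H$.
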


\begin{proof}
Since direct products of perfect groups are perfect, we only need to check condition (2) of Theorem  \ref{Th-cond}.

We need to show that for any $k\in \mathcal{O}(G\times H)$, the elements of order $k$ generate $G\times H$. Now, if $k\in \mathcal{O}(G\times H)$, then $k={\rm lcm} (a,b)$ where $a$ is the order of an element of $G$ and $b$ is the order of an element in $H$. Let $(g,h)$ be an arbitrary element in $G\times H$  -- we shall show it can be written as a product of elements of order $k$. 

First assume both $a,b>1$. Then by Lemma \ref{Le-Easy}, there exists an integer $s$ such that $g$ can be written as a product $g_1\dots g_s$ of $s$ elements of order $a$, and $h$ can be written as a product $h_1\dots h_s$ of $s$ elements of order $b$. It follows that $(g,h)=\prod (g_i,h_i)$, hence $(g,h)$ can be written as a product of elements of order $k$. 

Now assume without loss of generality that $b=1$. It immediately follows that we must have $a=k$. Now if $p$ is any prime dividing $a$, then since $p|H$ by assumption and $H$ is AAS, it is generated by elements of order $p$. In particular, there exists an integer $s$ such that $g$ can be written as a product $g_1\dots g_s$ of $s$ elements of order $a=k$, and $h$ can be written as a product $h_1\dots h_s$ of $s$ elements of order $p$. It follows that $(g,h)=\prod (g_i,h_i)$, where each $(g_i,h_i)$ has order $k$, and in particular $(g,h)$ can be written as a product of elements of order $k$. 

\end{proof}

As with the analogous result for $p$-groups, the importance of Theorem \ref{Th-perfectgroups} is that once we have found a single AAS  perfect group, we can generate infinitely many other AAS perfect groups by forming direct products with itself, or with other AAS perfect groups with the same primes in its factorization. In particular, this immediately shows that there are many perfect groups which are not simple which are themselves AAS, as we can take as many direct products of a simple group with itself as we please. As with $p$-groups, it suggests that AAS groups should appear quite regularly as the order of the group increases. How this number increases relative to the growth rate of perfect groups is not clear from our current work. Therefore, as with $p$-groups, we venture that an interesting line of study would be a study of the asymptotic growth rate of AAS perfect groups.

Magma \cite{magma}  contains a database of all perfect groups up to order 50,000.  We tested each of these for AAS and found that there are $229$ perfect groups up to order 50,000, of which $26$ are simple, and $86$ are non-simple and AAS.

\bibliographystyle{amsplain}

\end{document}